\newcommand{\q}[1]{{\textcolor{red}{#1}}}
\newcommand{\qr}[1]{#1}
\newcommand{\rd}[1]{#1}
\newcommand{\bigast}{\ast}%\ding bwteen 65 and 107 does not work
 \newtheorem{thm}{Theorem}[section]
 \newtheorem{cor}[thm]{Corollary}
 \newtheorem{lem}[thm]{Lemma}
 \newtheorem{prop}[thm]{Proposition}
\theoremstyle{definition}%
\theoremstyle{definition}
 \newtheorem{defn}[thm]{Definition}
  \newtheorem{exm}[thm]{Example}
  \theoremstyle{remark}%
 \theoremstyle{remark}
 \newtheorem{rem}[thm]{Remark}
\numberwithin{equation}{section}
\newcommand{\CC}{\mathcal{C}}
\newcommand{\cupp}{\smallsmile}
\newcommand{\di}{\mathit{d}}
\newcommand{\R}{\mathbb{R}}
\newcommand{\Z}{\mathbb{Z}}
\newcommand{\last}{\!\ast\!}
\newcommand{\D}{\mathcal{D}}
\newcommand{\eqdef}{\colon\!\!\! =}
\newcommand{\F}{\mathcal{F}}
\newcommand{\dd}{\mathrm{d}}
\newcommand{\im}{\mathop{\mathrm{im}}}
\newcommand{\varHo}{\mathcal{H}}
\newcommand{\Ho}{\mathrm{H}}
\newcommand{\id}{\mathrm{id}}
\begin{document}

\title [Relative cohomology]{Relative \qr{De Rham} cohomology in diffeological spaces}

\author{Enrique   Mac\'ias-Virg\'os}%\ORCID{0000-0000-0000-0000}
\author{Reihaneh Mehrabi}%\ORCID{0000-0000-0000-0000}

%\authormark{E. Mac\'ias-Virg\'os and R. Mehrabi}

\address{CITMAga, University of Santiago de Compostela, 
15782, 
Spain
}

%\corresp[$\ast$]{Corresponding author. \href{email:email-id.com}{quique.macias@usc.es}}

\email{quique.macias@usc.es, reihaneh.mehrabi@rai.usc.es}

%%% ----------------------------------------------------------------------
\abstract{
We define two different versions  of the relative De Rham cohomology groups  of a diffeological space. 
Additionally, we study a variant of the Mayer-Vietoris sequence and discuss the existence of a relative cup product.\\
Our approach is categorical, replacing open coverings with generating families and unions and intersections with pull-back and join constructions.\\
{\bf MSC 2020:} 14F40, % de Rham cohomology
58A40 \\% Differential spaces
}
{\bf Key words: }Diffeological space, De Rham cohomology, \qr{Mayer-Vietoris sequence}, \qr{Cup product}
}

%------------------------
\maketitle

%%%%%
%\setcounter{tocdepth}{2}
%\tableofcontents

%%%%%%%%%%%%%%%%%%%%%%%%%%
\section{Introduction}
%\subsection{}
Diffeological spaces are a generalization of differentiable manifolds, which provides a  framework for non-classical objects   like non-Hausdorff quotients of manifolds \qr{and} spaces of smooth functions. Also, diffeology theory seems to be a convenient setting   in various areas of physics.

A diffeology in the set $X$ is a family $\D$ of so-called {\em plots} ---that is, set maps $\alpha\colon U \to X$ whose domains are open subsets $U\subset \R^n$, $n\geq 0$, ---  satisfying three axioms: covering, locality, and compatibility (see Definition \ref{DEFI-DIFFEOL}).

%\subsection{Relative cohomology}
Many classical constructions can be generalized to this context, \qr{a\-mong them}, the graded algebra $\Omega^\bullet(X)$ of  differential forms, the exterior derivative $\dd$ and the De Rham
cohomology algebra $\Ho^\bullet(X)=\Ho(\Omega^\bullet(X),\dd)$, see Section \ref{DERHAM}. In this paper we study two different versions  of  {\em relative} De Rham cohomology. 
%\subsection{}

In the setting of topological spaces,  the relative  cohomology groups are defined (see \cite[Section 3.1]{HATCHER} or   \cite[Example 5.4.5]{SPANIER}) for any pair $(X,A)$ with $A\subset X$ a subspace.
In the diffeological setting, a relative De Rham cohomology group $\Ho(X,\alpha)$ was defined, in our previous paper \cite{MAC-MEHR}, for any smooth map $\alpha\colon U \to X$ (see Definition \ref{FIRST-REL}), as the cohomology of the complex $\Omega(X,\alpha)=\ker \alpha^*$,  the kernel of the induced morphism $\alpha^*\colon \Omega(X) \to \Omega(U)$.

However, the relative complex $\Omega(X,\alpha)$ \qr{does not have} an associated long exact sequence in cohomology.
% which prevents to obtain some applications of the preceding results.
 The \qr{reason} is that, in general,  the morphism $\alpha^*$ is not surjective, \qr{as} an argument based on partitions of unity is missing in the diffeological setting. 
 
In order \qr{to address this issue}, we introduce 
%in Section \ref{SRDRC}
a second relative De Rham complex, denoted by $\varOmega(X,\alpha)$. This \qr{new} complex \qr{mirrors} the analogous one given by Bott-Tu in \cite[Prop. 6.49]{BOTT-TU}.
It  is defined as
$$\varOmega^r(X,\alpha)=\Omega^r(X)\oplus \Omega^{r-1}(\alpha),$$
where $\Omega(\alpha)\subset \Omega(U)$ is the subcomplex of the so-called {\em horizontal forms} 
%(Section \ref{HORIZ}), 
\qr{as considered} by us in \cite{MAC-MEHR}.  The differential
$\di\colon \varOmega^r(X,\alpha) \to \varOmega^{r+1}(X,\alpha)$
is defined as
$\di(\omega,\theta)=(\dd\omega, \alpha^*\omega-\dd\theta)$.

The cohomology of $(\varOmega(X,\alpha),\di)$ will be denoted by $\varHo(X,\alpha)$.
We then obtain a short exact sequence of complexes 
and the associated long exact  sequence in cohomology (see Equations \eqref{SHORT} and \eqref{LONG}). 
%\subsection{Join and Generating families}

As an auxiliary tool we will  introduce in Section \ref{JOIN} the {\em join} of two arbitrary mooth maps $\alpha\colon U \to X $ and $\beta\colon V \to X$. This {\em join} is a
smooth map $\alpha\last\beta\colon U\last V \to X$ which plays the role of the union of two open sets in the topological setting; actually, it is the categorical push-out of the pull-back of $\alpha$ and $\beta$. 

In Section \ref{GEN-FAM} we study how the De Rham complex
of $X$ is determined by giving a {\em generating family}  $\alpha_1,\dots,\alpha_n$ of the diffeology  (\cite[Art. 1.66]{PATRICK}), see Definition \ref{GENFAM}). 
%We will prove (Corollary \ref{GENER-COR})  that
%$$\Ho(X,\alpha_1\last\cdots\last\alpha_n)=0=
%\varHo(X,\alpha_1\last\cdots\last \alpha_n).$$
Actually, $X$ is diffeomorphic to the join $\alpha_1\last\cdots\last \alpha_n$, as we proved in \cite{MR2}.
This is again an indication that generating families play the role of open coverings in the diffeological setting%
%as we pointed out in \cite{MAC-MEHR}.
%\subsection{}

\qr{Section \ref{MAYER-VIETORIS-1} is dedicated to constructing a Mayer-Vietoris sequence}
\begin{equation}\label{MAYER-VIETORIS-2-BIS}
\begin{tikzcd}[column sep=12pt]
0\arrow[r]& \Omega(\alpha\last\beta) \arrow[r,"J"]& \Omega(\alpha)\oplus\Omega(\beta) \arrow[r,"\varPi"]& \Omega(p_X)
\end{tikzcd}
\end{equation}
where $\Omega(p_X)$ is the complex of pull-back horizontal forms.
It improves upon the one we defined  in \cite{MAC-MEHR}. 

\qr{When the morphism $\varPi$ in 
\eqref{MAYER-VIETORIS-2-BIS} is surjective, we will say that the Mayer-Vietoris sequence is {\em split}.} In this case we have a long exact sequence in cohomology, see Proposition \ref{LONGEXACTSEQUENCE}. For example, the sequence associated with a nice cover considered by Iwase and Izumida in \cite[Theorem 2.3]{IWASE} is split.

In Section \ref{CP} we introduce a relative cup product for the first type cohomology,
$$
\Ho^r (X,\alpha)\otimes \Ho^s (X,\beta)\rightarrow \Ho^{r+s}(X,
\alpha*\beta),
$$
where $(\alpha,U)$, $(\beta,V)$ are arbitrary plots of the diffeology. This product is achieved
by considering the exterior product of differential forms. 

Finally, we define a skew-symmetric relative cup product
$$
\mathcal{H}^r (X,\alpha)\otimes \mathcal{H}^s (X,\beta)\rightarrow \mathcal{H}^{r+s}(X,
\alpha*\beta)
$$
for the second type cohomology, but only under the condition that the Mayer-Vietoris sequence is split. \qr{The formulae required are intricate and far from obvious, inspired by those  in \cite{MACIAS}}. 

%Details of all funding sources for the work in question should be given in a separate section entitled %'Funding'. This should appear before the 'Acknowledgements' section.
%The sentence should begin: 'This work was supported by …'
\subsection{Funding}
This work was supported by  Xunta de Galicia ED431C 2019/10 with FEDER funds. The first author was partially supported by MINECO-Spain research project PID2020-MTM-114474GB-I00

\subsection{Acknowledgements}
We thank Daniel Tanr\'e for pointing us reference \cite{RIJKE} and an anonymous referee for many useful remarks.

%=============================
\section{Diffeological spaces}
The main reference for Diffeology Theory is Iglesias's book \cite{PATRICK}.

A {\em parametrization} on the set $X$  is any set map $\alpha\colon U \to X$, where $U\subset \R^n$ is an open subset of $\R^n$, for some $n\geq 0$ depending on $U$. Often, we shall denote it by $(\alpha,U)$.
\begin{defn}\cite[Art. 1.5]{PATRICK}\label{DEFI-DIFFEOL} 
Let $X$ be a non-empty set. A {\em diffeology} on $X$ is any set $\D$  of parametrizations such that the three following
axioms are satisfied:
\begin{enumerate}
\item
{\em Covering}. Any constant parametrization belongs to $\D$.
\item
{\em Locality}. If the parametrization  $(\alpha,U)$  satisfies that for any $p\in U$ there exists an open neighborhood $V\subset U$ of $p$ such that the restriction $\alpha_{\vert V}$ belongs to $\D$, then $\alpha$ belongs to $\D$.
\item
{\em Compatibility}. If $(\alpha,U)$ belongs to $\D$, and $h\colon V\subset \R^m \to U\subset \R^n$ is a ${\mathcal C}^\infty$-differentiable map, then $(\alpha\circ h, V)$ belongs to $\D$.
\end{enumerate}
\end{defn}

The pair $(X,\D)$ will be called a {\em diffeological space}. Axiom 1 ensures that
$\D\neq\emptyset$;  from Axiom 2,
parametrizations can be glued together; Axiom 3 means that we can change coordinates inside $\D$.

A set map $f\colon (X,\D_X) \to (Y,\D_Y)$ between diffeological spaces is a {\em smooth map} if $f\circ \alpha\in \D_Y$ for all $\alpha\in\D_X$.

%===============================

\section{De Rham cohomology}\label{DERHAM}
\subsection{Differential forms}We begin by recalling the basic definitions of Cartan calculus and De Rham theory in diffeological spaces, as presented in
\cite[Art. 6.28]{PATRICK}. Later we provide a definition  of the relative cohomology groups of a smooth map (Definition \ref{FIRST-REL}).

\begin{defn}A differential form of degree $r\geq 0$ in the diffeological space $(X,\D)$
is a map $\omega$ which associates to each plot $(\alpha,U)\in \D$ a differential $r$-form $\omega_\alpha\in \Omega^r(U)$ \qr{on} the domain $U\subset \R^m$ of the plot, in such a way that, for any ${\mathcal C}^\infty$ change of coordinates $h\colon V \to U$, it happens that
$\omega_{\alpha\circ h}=h^*\omega_\alpha$.

We denote by $\Omega^r(X,\D)$ or simply by $\Omega^r(X)$ the vector space of $r$-forms in $X$.
\end{defn}
\begin{defn}\label{EXT-PROD}The {\em exterior produc}t of the forms $\omega\in \Omega^r(X)$ and $\mu\in\Omega^s(X)$ is the form $\omega\wedge \mu \in \Omega^{r+s}(X)$ defined as
$$(\omega \wedge \mu)_\alpha=\omega_\alpha \wedge_U \mu_\alpha, \quad (\alpha,U)\in\D,$$
where $ \wedge_U $ is the usual exterior product in $\Omega(U)$.
\end{defn}

Analogously,
\begin{defn} The {\em exterior derivative}
$\dd\colon\Omega^r(X,\D) \to \Omega^{r+1}(X,\D)$ is defined as
$$(\dd_X\omega)_\alpha= \dd_U\omega_\alpha, \quad (\alpha,U)\in\D.$$
\end{defn}

The {\em De Rham cohomology groups} $\Ho(X,\D)$ of the diffeological space $(X,\D)$ are those of the complex $(\Omega^\bullet(X,\D),\dd)$.\\

Any smooth map $f\colon X \to Y$ between diffeological spaces induces a morphism $f^*\colon \Omega(Y)\to \Omega(X)$ given by \qr{{\em the pull-back}}
$(f^*\omega)_\alpha=\omega_{f\circ\alpha}$. In particular, if $\alpha\colon U \to X$ is a plot and $\omega$ is a form on $X$, then $\omega_\alpha=\alpha^*\omega$.

Since
$\dd_X\circ f^* = f^*\circ \dd_Y$,
there is an induced morphism 
$f^*\colon \Ho(Y) \to \Ho(X)$ in cohomology given as
$f^*([\omega])=[f^*\omega]$.

\subsection{Horizontal forms}\label{HORIZ}
For later purposes, we need to recall the notion of {\em horizontal form} that we considered in \cite[Section 2.3]{MAC-MEHR}. The $\alpha$-horizontal forms  are called {\em basic forms} in Iglesias' book \cite[Art. 6.38]{PATRICK} .

If $\alpha\colon U \to X$ is a smooth map between diffeological spaces  (in particular, if $\alpha$ is a plot on $X$), the image of the induced map $\alpha^*\colon \Omega(X) \to \Omega(U)$ is contained in the subcomplex  of {\em $\alpha$-horizontal forms}.

\begin{defn}\label{VERTEQ}
The differential form $\theta\in\Omega(U)$ is {\em $\alpha$-horizontal} when for any changes of coordinates $h,h'\colon V \to U$  such that $\alpha\circ h = \alpha\circ h'$,  \qr{the following condition holds:}
$ h^*\theta = (h')^*\theta$.
\end{defn}

The horizontal forms form  a subcomplex $\Omega(\alpha)$ of $\Omega(U)$ because if $\theta\in \Omega^k(\alpha)$ and $\alpha\circ h=\alpha\circ h'$ then
$$h^*(\dd_U\theta)=\dd_Vh^*\theta=\dd_V(h')^*\theta=(h')^*(\dd_U\theta),$$
hence $\dd_U\theta\in\Omega^{k+1}(\alpha)$.

\begin{lem}\label{EASY}If  $X \stackrel{f}{\to} Y \stackrel{g}{\to} Z$ are smooth maps between diffeological spaces and $\omega\in \Omega(g)\subset \Omega(Y)$ is a $g$-horizontal  form, then $f^*\omega\in \Omega(X)$ is a $(g\circ f)$-horizontal form.
\end{lem}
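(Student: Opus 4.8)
The plan is to reduce the assertion to the contravariant functoriality of the pullback of forms, using only the definition of horizontality (Definition \ref{VERTEQ}) together with the hypothesis on $\omega$. By definition, proving that $f^*\omega$ is $(g\circ f)$-horizontal amounts to checking that for every pair of changes of coordinates $k,k'\colon V \to X$ with $(g\circ f)\circ k = (g\circ f)\circ k'$, one has $k^*(f^*\omega)=(k')^*(f^*\omega)$.

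First I would fix such a pair $k,k'$ and form the composites $h\eqdef f\circ k$ and $h'\eqdef f\circ k'$, which are changes of coordinates $V \to Y$. Associativity of composition gives $g\circ h = (g\circ f)\circ k = (g\circ f)\circ k' = g\circ h'$, so $h$ and $h'$ become equal after postcomposition with $g$.

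Since $\omega$ is $g$-horizontal, applying Definition \ref{VERTEQ} to the pair $h,h'$ yields $h^*\omega=(h')^*\omega$. Using that the pullback is strictly functorial, $(f\circ k)^*=k^*\circ f^*$, I then obtain the chain
$$k^*(f^*\omega)=(f\circ k)^*\omega = h^*\omega = (h')^*\omega = (f\circ k')^*\omega = (k')^*(f^*\omega),$$
which is exactly the desired equality.

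There is essentially no obstacle here: the argument is a formal manipulation of pullbacks. The only point that requires a moment's attention is that postcomposing arbitrary changes of coordinates on $X$ with the fixed map $f$ produces genuine changes of coordinates on $Y$, to which the $g$-horizontality of $\omega$ applies; once this is observed, the strict functoriality of $(\cdot)^*$ closes the chain of equalities and completes the proof.
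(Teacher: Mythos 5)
Your proof is correct. The paper states Lemma \ref{EASY} without proof (it is treated as immediate), and your argument --- pushing the pair $k,k'$ forward along $f$ to get $h=f\circ k$, $h'=f\circ k'$ with $g\circ h=g\circ h'$, then closing the chain via the functoriality $(f\circ k)^*=k^*\circ f^*$, which in the diffeological setting is just the definition $(f^*\omega)_k=\omega_{f\circ k}$ --- is exactly the routine verification the authors leave implicit.
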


\subsection{Relative De Rham cohomology}\label{SRDRC}
\qr{The following} is the analogous version of a usual notion of relative cohomology for manifolds, \qr{as in Godbillon's book} \cite{Godbillon}.

   Let $\alpha\colon U\to X$ be a smooth map between diffeological spaces.
   Let $\alpha^*\colon \Omega(X)\to \Omega(U)$ be the induced \qr{morphism of} De Rham complexes.
\begin{defn}\label{FIRST-REL}
The {\em relative cohomology groups} (first version)  $\Ho(X,\alpha)$ of the smooth map $\alpha\colon U \to X$  are those of the complex \qr{$\Omega(X,\alpha)=\ker \alpha^*$},  the kernel of 
$\qr{\alpha^\ast} $.
\end{defn}
Note that if $\alpha^*\omega=0$, then $\alpha^*d\omega=d\alpha^*\omega=0$, hence $\ker\alpha^*$ is a subcomplex of $\Omega(X)$ \qr{and} we have an exact sequence
   \begin{equation*}
   	\begin{tikzcd}[column sep=15pt]
   		0 \arrow[r]&\Omega(X,\alpha)\arrow{r}&\qr{\Omega(X)}\arrow{r}{\alpha^*}&\Omega(U).
   	\end{tikzcd}
   \end{equation*}
\qr{However, this definition has a limitation}: the morphism  $\alpha^*$  is not surjective due to the lack of partitions of unity in diffeological spaces. \qr{As a consequence, it is not possible to construct} a long exact sequence in cohomology.

A different relative De Rham theory   for manifolds was introduced by Bott-Tu in \cite[Prop. 6.49]{BOTT-TU}. We will introduce a similar theory for diffeological spaces \qr{as our second version} of the relative cohomology groups.
\begin{defn}Let $\alpha \colon U \to X$ be a smooth map between diffeological spaces. The {\em relative De Rham complex} (second version) $\varOmega(X,\alpha)$ is defined as
$$\varOmega^r(X,\alpha)=\Omega^r(X) \oplus \Omega^{r-1}(\alpha), \quad r\geq 0,$$
with differential
$\di\colon \varOmega^r(X,\alpha) \to \varOmega^{r+1}(X,\alpha)$
given by
$\di(\omega, \theta)=(\dd\omega, \alpha^*\omega-\dd \theta)$.
\end{defn}

The differential is well defined because $\omega\in\Omega(X)$ implies $\alpha^*\omega\in \Omega(\alpha)$. It is easy to check that $\di\circ\di=0$. We denote by  $\varHo^r(X,\alpha)$ the cohomology groups of $(\varOmega^\bullet(X,\alpha),\di)$. A cohomology class is represented by a closed form  $\omega$ in $X$ which becomes exact when pulled back to $U$. 

\begin{prop}
There is 
the short exact sequence of complexes
\begin{equation}\label{SHORT}
\begin{tikzcd}[column sep=15pt]
0 \arrow[r]&\Omega^{r-1}(\alpha)\arrow[r,"i"]&\varOmega^r(X,\alpha)\arrow[r,"\pi"]&\Omega^r(X)\arrow[r]&0,
\end{tikzcd}
\end{equation}
where  $i(\theta)=(0,\theta)$  and  $\pi(\omega,\theta)=\omega$. This induces a long exact sequence in cohomology:
\begin{equation}\label{LONG}\begin{tikzcd}[column sep=15pt]
\cdots \arrow[r] &\Ho^{r-1}(\alpha)\arrow[r,"i^*"]&\varHo^r(X,\alpha)\arrow[r,"\pi^*"]&\Ho^r(X)\arrow[r,"\alpha^*"]&\Ho^r(\alpha)\arrow[r]&\cdots
\end{tikzcd}
\end{equation}
\end{prop}

Notice that $i$ anti-commutes with the derivatives. It is easy to check that the connecting morphism in \eqref{LONG} is just $\alpha^*$.
\begin{rem}
For a differentiable manifold $X$ and an open subet $i_U\colon U \hookrightarrow X$, the complexes $\Omega(X,U)$ and $\varOmega(X,i_U)$ are not isomorphic but have isomorphic cohomology groups, by a partition of unity argument. 
\end{rem}

\qr{The diffeological De Rham complexes $\Omega(X,\alpha)$ and $\varOmega(X,\alpha)$ are related in the following way}.

\begin{prop}\label{MORPHISM}\label{ISOMORPHISM}

	\qr{We define} a morphism of complexes  $F\colon \Omega(X,\alpha)\to \varOmega(X,\alpha)$ \qr{given by} $F(\omega)=(\omega,0).$
Let  $F^*\colon \Ho(X,\alpha)\to \varHo(X,\alpha)$ be the morphism induced in cohomology.
    If $\alpha^*$ is surjective then $F^*$ is an isomorphism.
\end{prop}
\begin{proof}
	We have $dF(\omega)=d(\omega,0)=(d\omega,\alpha^*\omega)=(d\omega,0)$ because $\omega  \in \qr{\ker}\alpha^*$ hence $dF=Fd$.
Then $F^*([\omega])=[(\omega,0)]$ , $d\omega=0$ and $\alpha^*\omega=0$.

	Firstly \qr{we} show that $F^*$ is injective.
	Let $F^*([\omega])=0$, that is, $[(\omega,0)]=0$, hence if 
 $$(\omega,0)=d(\omega_1,\theta_1)=(d\omega_1,\alpha^*\omega_1-d\theta_1),$$ then $\omega=d\omega_1$.
	The problem is that $\omega_1$ is not $\alpha$-horizontal. Let $\tilde{\theta_1} \in\ \Omega(X)$ such that $\alpha^*\tilde{\theta_1}=\theta_1$.
	Then $\omega=d(\omega_1-d\tilde{\theta_1})$; but 
 $$\alpha^*(\omega_1-d\tilde{\theta_1})=\alpha^*\omega_1-\alpha^*d\tilde{\theta_1}=\alpha^*\omega_1-d\alpha^*\tilde{\theta_1}=\alpha^*\omega_1-d\theta_1=0$$ because $\alpha^*\omega_1=d\theta_1$. Hence $\omega=d(\alpha-\text{horizontal form})$, so $[\omega]=0$.
	
	Now, we show that  $F^*$ is surjective.
	Let $[(\omega,\theta)]\in\ \varHo(X,\alpha)$, that is, $d\omega=0$, $\alpha^*\omega=d\theta$. Let $\tilde{\theta}\in\ \Omega(X)$ \qr{such that} $\alpha^*\tilde{\theta}=\theta$. Then $$(\omega-d\tilde{\theta},0)+d(\tilde{\theta},0)=(\omega-d\tilde{\theta},0)+(\dd\tilde{\theta},\alpha^*\tilde{\theta})=(\omega,\theta),$$ hence 
  $$[(\omega,\theta)]=[(\omega-d\tilde{\theta},0)]=F^*([\omega-d\tilde{\theta}]).$$
	 Notice that 
  $$\alpha^*(\omega-d\tilde{\theta})=\alpha^*\omega-\alpha^*d\tilde{\theta}=\alpha^*\omega-d\theta=0.\qedhere$$
\end{proof}	

%\subsection{Examples}\label{EXAMPLES}

\begin{exm}[The circle] The cohomology of the circle $\mathbb S^1$, with its diffeology as a manifold, is the usual De Rham cohomology (see \cite{HEC-MAC-SAN}), that is
$\Ho^0(X)=\langle1\rangle=\R$, $\Ho^1(X)=\langle\mathrm{vol}\rangle=\R$. 
\end{exm}

\begin{exm}[The irrational torus]\label{IRRATORUS}%We can check Theorem \ref{BOUND} in a non-classical example. 
	Let $a$ be an irrational number. The irrational torus $X=\mathbb T_a$ (\cite{DONA-IGLE},  cf. \cite[Exercise 1.4]{PATRICK})  is the quotient of $U=\mathbb R$ by the equivalence relation  $s\sim t$ iff $s-t=m+n a$ for some integers $m,n\in \mathbb Z$. 
	
	This diffeological space $X$ is the space of leaves of the flow $\mathcal F$ on the torus $\mathbb T^2$ by lines of constant slope $a$. This is an example of a  foliation with dense leaves, which is defined by the closed form $\dd y - a \dd x$.
    
	It has been proved that the {\em basic cohomology} $\Ho(M/\mathcal F)$ of any foliation equals the De Rham cohomology of its space of leaves $M/\mathcal F$ as a diffeological space (\cite{HEC-MAC-SAN}), hence $\Ho(\mathbb T_a)\cong \Ho(\mathbb T^2/\mathcal F)$. 	
    
	On the other hand, it is well known that the {basic cohomology} $\Ho(\mathbb T^2/\mathcal F)$  (\cite[Prop. II.1]{KAC-SERG}) of the \qr{flow} $\mathcal F$   is the cohomology of the abelian Lie algebra $\R$, that is, $\qr{\Ho^0=\R}$, $\qr{\Ho^1=\R}$. 
	\end{exm}

\subsection{Subductions}
%\begin{exm}[Subductions] 
Subductions (in the terminology of \cite[Art. 1.47]{PATRICK}) are an important particular example of smooth maps in diffeology. They are surjective maps characterized by the local factorization of the plots on the codomain through the  diffeology of the domain (\cite[Art. 1.43]{PATRICK}).
%\end{exm}

\begin{thm}[{\cite[Art. 6.39]{PATRICK}}]\label{INJECT}
If $\pi\colon U \to X$ is a subduction  then the induced morphism
$\pi^*\colon \Omega(X) \to \Omega(U)$ is injective. 
\end{thm}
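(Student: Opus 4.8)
The plan is to show that if $\omega\in\Omega(X)$ satisfies $\pi^*\omega=0$, then $\omega=0$; by definition this means $\omega_\gamma=0$ for every plot $\gamma\colon W\to X$. Since a differential form assigns to each plot an ordinary form on an open subset of some $\R^n$, and such forms are local objects, it will suffice to verify that $\omega_\gamma$ vanishes in a neighbourhood of each point $p\in W$, and then invoke locality to conclude that $\omega_\gamma=0$ on all of $W$.

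First I would use the defining property of a subduction recalled just before the statement: any plot on the codomain factors locally through the diffeology of the domain. Thus, given a plot $\gamma$ and a point $p\in W$, there exist a neighbourhood $W_p\subset W$ of $p$ and a plot $\tilde\gamma\colon W_p\to U$ with $\gamma_{\vert W_p}=\pi\circ\tilde\gamma$.

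Next I would combine this factorization with the definition of the induced morphism, $(\pi^*\omega)_\delta=\omega_{\pi\circ\delta}$, and with the compatibility axiom for forms under the change of coordinates given by the inclusion $j\colon W_p\hookrightarrow W$. Writing $\gamma\circ j=\gamma_{\vert W_p}$ and $j^*\omega_\gamma=(\omega_\gamma)_{\vert W_p}$, one obtains
$$(\omega_\gamma)_{\vert W_p}=j^*\omega_\gamma=\omega_{\gamma\circ j}=\omega_{\gamma_{\vert W_p}}=\omega_{\pi\circ\tilde\gamma}=(\pi^*\omega)_{\tilde\gamma}=0.$$
Since $p\in W$ was arbitrary, $\omega_\gamma$ vanishes on a neighbourhood of every point of $W$, hence $\omega_\gamma=0$; as $\gamma$ was an arbitrary plot, $\omega=0$, so $\pi^*$ is injective.

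I do not expect a genuine obstacle here: the argument is a direct unwinding of the local factorization property of subductions together with the locality of differential forms. The only point that requires any care is the passage from local vanishing to global vanishing, which relies on $W$ being an open subset of Euclidean space, where a form is determined by its restrictions to a neighbourhood of each point.
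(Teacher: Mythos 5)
Your proof is correct and complete: the paper itself states this theorem without proof, citing \cite[Art.~6.39]{PATRICK}, and your argument is precisely the standard one for that result. The three ingredients you use — the local factorization $\gamma_{\vert W_p}=\pi\circ\tilde\gamma$ characterizing subductions (recalled in the paper just before the statement), the identity $(\pi^*\omega)_{\tilde\gamma}=\omega_{\pi\circ\tilde\gamma}$ together with compatibility under the inclusion $W_p\hookrightarrow W$, and the fact that an ordinary form on the Euclidean domain $W$ vanishing near every point vanishes identically — assemble exactly as you claim, with no gap.
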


\begin{cor}\label{ISOM}
For a subduction $\pi\colon U \to X$, the complex $\Omega(X)$ is isomorphic to the complex $\Omega(\pi)\subset \Omega(U)$ of $\pi$-horizontal forms.
\end{cor}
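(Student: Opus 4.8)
The plan is to show that $\pi^*\colon \Omega(X) \to \Omega(U)$ is an isomorphism of complexes onto the subcomplex $\Omega(\pi)$. Three of the four required ingredients are already available: $\pi^*$ is a morphism of complexes because it commutes with $\dd$; it is injective by Theorem \ref{INJECT}; and its image is contained in $\Omega(\pi)$, as recorded at the beginning of Section \ref{HORIZ}. Hence the only thing left to prove is that $\pi^*$ is \emph{surjective} onto $\Omega(\pi)$: given a $\pi$-horizontal form $\theta\in\Omega(\pi)$, I must produce a form $\omega\in\Omega(X)$ with $\pi^*\omega=\theta$.

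I would build $\omega$ plot by plot, using that a quotient map (subduction) is characterized by the local factorization of the plots of the codomain through $\pi$. Given a plot $\gamma\colon W\to X$ and a point $p\in W$, choose a neighbourhood $W_p\subset W$ and a lift $\tilde\gamma\colon W_p\to U$ with $\pi\circ\tilde\gamma=\gamma_{\vert W_p}$, and set $\omega_\gamma$ on $W_p$ to be $\tilde\gamma^*\theta$. The form $\omega_\gamma\in\Omega(W)$ is then obtained by gluing these local expressions.

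The crux is well-definedness, and this is exactly where $\pi$-horizontality enters. If $\tilde\gamma,\tilde\gamma'\colon W_p\to U$ are two lifts of the same restriction, so that $\pi\circ\tilde\gamma=\pi\circ\tilde\gamma'$, then Definition \ref{VERTEQ} applied to these two maps (which share the Euclidean domain $W_p$) gives $\tilde\gamma^*\theta=(\tilde\gamma')^*\theta$. The same argument shows that the local pieces agree on overlaps, so by the locality property of forms they glue to a unique $\omega_\gamma$. Naturality is then automatic: if $k\colon W'\to W$ is a change of coordinates and $\tilde\gamma$ is a local lift of $\gamma$, then $\tilde\gamma\circ k$ is a local lift of $\gamma\circ k$, whence $\omega_{\gamma\circ k}=(\tilde\gamma\circ k)^*\theta=k^*(\tilde\gamma^*\theta)=k^*\omega_\gamma$, which proves $\omega\in\Omega(X)$.

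Finally I would check $\pi^*\omega=\theta$. For any plot $\delta\colon W\to U$, the composite $\pi\circ\delta$ is a plot of $X$ for which $\delta$ is itself a global lift, so by construction $(\pi^*\omega)_\delta=\omega_{\pi\circ\delta}=\delta^*\theta=\theta_\delta$; hence $\pi^*\omega=\theta$. The main obstacle is precisely the lift-independence used above, but it dissolves once one observes that any two local lifts over a point share a common Euclidean domain and therefore fall directly under the horizontality condition of Definition \ref{VERTEQ}. Together with the injectivity of $\pi^*$ and the fact that it commutes with $\dd$, this yields the desired chain isomorphism $\Omega(X)\cong\Omega(\pi)$.
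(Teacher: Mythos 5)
Your proof is correct and follows essentially the same route as the paper: injectivity and commutation with $\dd$ come from Theorem \ref{INJECT}, and surjectivity onto $\Omega(\pi)$ is obtained by building $\omega$ plot by plot from local lifts through $\pi$, with $\pi$-horizontality (Definition \ref{VERTEQ}) guaranteeing independence of the lift. The only difference is that you spell out the gluing, naturality, and the verification $\pi^*\omega=\theta$, which the paper leaves as ``easy to check.''
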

\begin{proof}Let $\theta \in \Omega^k(\pi)\subset \Omega^k(U)$ be a $\pi$-horizontal form. We define a form $\omega\in \Omega^k(X)$ as follows: if $\gamma\colon W \to X$ is a plot, and $p\in W$, then $\gamma$ locally factors through $\pi$, say
$\gamma= \pi\circ h$ for some $h\colon W_p \to U$. We define $\omega_\gamma=h^*\theta$ in the neighbourhood $W_p$.

It is easy to check that this definition does not depend on $h$ (because $\theta$ is $\pi$-horizontal), that the form $\omega$ is well defined (that is, it verifies the compatibility with the changes of coordinates), and that $\pi^*\omega=\theta$.

This proves that the image of $\pi^*$ is $\Omega(\pi)$. Hence $\Omega(X)\cong \Omega(\pi)$ by Theorem \ref{INJECT}.
\end{proof}

	Then, \qr{for any subduction} $\pi\colon U\to X$, the first type relative cohomology group is $\Ho(X,\pi)=0$ because $\ker\pi^*=0$.

	%The canonical projection $\pi\colon U \to X$ is a plot that generates the quotient diffeology.

	On the other hand, the second type \qr{De Rham} complex 
	is isomorphic by $\pi^*$ with the complex
	\begin{equation}\label{COMPLEXSUBD}
		\cdots \to \Omega^r(\pi)\oplus \Omega^{r-1}(\pi) \stackrel{\dd}{\longrightarrow}
		\Omega^{r+1}(\pi)\oplus\Omega^r(\pi) \to \cdots
	\end{equation}
	with differential
$\dd(\omega,\mu)=(\dd\omega,\omega-\dd\mu)$.
%\end{exm}

\begin{exm}%[A subduction map]
	Let $X=\mathbb T_a$ be the irrational torus of Example \ref{IRRATORUS} and let  $\pi\colon \R \to X$ be the subduction map for the equivalence relation defining $X$.
	Since $\pi$ is a subduction, the forms in $\Omega(X)$ are the $\pi$-horizontal forms in $M=\R$.
	
	First, the horizontal $0$-forms are the constant functions. \qr{Indeed}, let $f$ be a $0$-form, that is, a real function. We consider the maps $h,h'\colon U=\R \to \R$ given by
	$$
	h(t)=t, \quad
	h'(t)=t+m+n\alpha.
	$$
	Clearly $\pi\circ h=\pi\circ h'$. The condition $h^*f=(h')^*f$ means
	$f\circ h=f\circ h'$,
	that is,
	$f(t)=f(t+m+n\alpha)$, for all $t$.
	Since \qr{the integers} $m,n\in\Z$ are arbitrary and the set
	$$\Delta(\alpha)=\{m+n\alpha\colon m,n\in \Z\}$$
	is dense in $\R$, we obtain that $f$ is a constant function.
	
	Analogously, the $\pi$-horizontal $1$-forms are $\omega=c\,dt$ where $c\in\R$ is a constant.
	Then we can write $\Omega^0(\pi)=\langle 1\rangle_\R\cong\R$ and $\Omega^1(\pi)=\langle dt\rangle_\R\cong\R$,
	and the differential $d\colon \Omega^0\to\Omega^1$ is $d=0$.
	
	We can now compute the two types of relative cohomology groups for $\pi$.
	The first type relative groups are $\Ho(X,\pi)=0$ because they are the cohomology of $\ker\pi^*=0$. 
	For the second type groups $\varHo(\pi)$ we consider the complex \eqref{COMPLEXSUBD}:
	$$\cdots \to\Omega^0(\pi)\oplus 0\stackrel{\dd_0}{\longrightarrow} \Omega^1(\pi)\oplus\Omega^0(\pi)\stackrel{\dd_1}{\longrightarrow}0\oplus\Omega^1(\pi)\to\cdots$$
	with differential 
	$$\dd_0 (c,0) =(0,c), \quad \dd_1(a\,dt,b)=(0, a\,dt).$$
	Then
	$\varHo^0(X,\pi)=\ker \dd_0 =0$
	and
	$\varHo^1(X,\pi)=\ker\dd_1/\im\dd_0 =0$.
    %\q{This confirms that the two types of relative cohomology groups are isomorphic because $\pi^*$ is an isomorphism}
\end{exm}

\section{Limits and colimits}\label{JOIN}
\qr{In our previous paper \cite{MR2} we outlined the importance of several categorical constructions in the context of diffeological spaces.}
\subsection{Join}
The category of diffeological spaces and smooth maps
is stable under  limits and colimits (see \cite{B-H}). This allows us to introduce the {\em join} of two maps.

\begin{defn}Let $\alpha\colon U \to X$ and $\beta\colon V \to X$ be two smooth maps with the same codomain $(X,\D)$. 
The {\em join} of $\alpha$ and $\beta$ is a diffeological space $U\last V$ endowed with a smooth map $\alpha\last\beta \colon U\last V \to X$, defined as follows.

We take the disjoint union $U \sqcup V$ with the coproduct diffeology generated by the  diffeologies $\D_U$ and $\D_V$ (see \cite[Art. 1.39]{PATRICK}).
Then introduce the equivalence relation in $U\sqcup V$ generated by
$$u\sim v \text{\ iff\ } u\in U, v\in V, \alpha(u)=\beta(v),$$
and we endow the quotient space $U\last V = (U\sqcup V)/\!\sim$ with the quotient diffeology \cite[Art. 1.50]{PATRICK}. We denote the quotient map by 
$$\CC\colon U\sqcup V \to U\ast V.$$

Finally we define the smooth map $\alpha\last\beta \colon U\last V \to X$ as
$$(\alpha\last\beta)([w])=
\begin{cases}
\alpha(u) &\text{if\ } w=u\in U,\\
\beta(v) &\text{if\ } w=v\in V.
\end{cases}$$
\end{defn}

We have the commutative diagram \eqref{DIAG-QUOT} where  $\alpha_U\eqdef \CC\circ i_U$  and $\beta_V\eqdef \CC\circ i_V$:
\begin{equation}\label{DIAG-QUOT}
\begin{tikzcd}[row sep=18pt]
U \arrow[dr,"\alpha_U"']\arrow[r,"i_{U}"]\arrow[bend right,{ddr},"{\alpha}"']&U\sqcup V\arrow[d,"\CC"] &V\arrow[ld,"\beta_V"]\arrow[l,"i_V"']\arrow[bend left]{ddl}{\beta}\\
&U*V \arrow{d}{\alpha *\beta}&\\
&X&
\end{tikzcd}
\end{equation}

\subsection{Pull-back and push-out}This join of two maps is in fact {\em the push-out} of the {\em pull-back} of the two maps, as it is clear from the following constructions. 

First, one can consider the {\em pull-back} $P$ of the maps $\alpha$ and $\beta$, as we did in \cite{MAC-MEHR}. It is defined as the subspace $P\subset U \times V$ of pairs $(u,v)\in U \times V$ such that $\alpha(u)=\beta(v)$. Then the projections
$p_1\colon U \times V \to U$ and $p_2\colon U \times V \to V$ induce the maps $p_U,p_V$ in the following diagram:
\begin{equation}\label{pullback-1}%
\begin{tikzcd}
P\arrow[d,"p_U"',dashed]\arrow[r,"p_V",dashed]&V\arrow[d,"\beta"]\\
U\arrow[r,"\alpha"]&X.
\end{tikzcd}
\end{equation}
Now we consider the {\em push-out} of the maps $p_U, p_V$. It is given as the space $Q$ obtained by introducing in $U\sqcup V$ the equivalence relation generated by
$u\sim v$  iff  $u\in U, v\in V$, and there exists $w\in P$ such that $p_U(w)=u$ and $p_V(w)=v$. New maps $\alpha_U,\beta_V$ appear, as in the following diagram:
$$\begin{tikzcd}
P\arrow[d,"p_U"']\arrow[r,"p_V"]&V\arrow[d,"\beta_V",dashed]\\
U\arrow[r,"\alpha_U",dashed]&Q.
\end{tikzcd}$$
It is easy to check that $Q=U\last V$, and that $\alpha\last\beta \colon U\last V \to X$ is the map obtained by the universal property of the push-out.

We refer to \cite{RIJKE} for more information about the join construction in other settings.
\subsection{Infinite families}
It is possible to give non-finite versions of all those constructions. For instance, the non-finite pull-back would be defined as follows.

\begin{defn}Let $\F=\{\alpha_\lambda \colon U_\lambda\to X\}_{\lambda\in \Lambda}$ be an arbitrary family of smooth maps with codomain the diffeological space $(X,\mathcal D)$.
The {\em pull-back} or {\em limit} of the family $\F$ is the diffeological space 
$$P=\{(x_\lambda) \in\prod_{\lambda\in\Lambda}X_\lambda \colon \alpha_\lambda(x_\lambda)=\alpha_\mu(x_\mu)\quad\forall\lambda, \mu\in \Lambda \}.$$
We endow $\prod_{\lambda\in\Lambda} X_\lambda$ with the product diffeology (the smallest diffeology making smooth all the projections) and $P$ with the subspace diffeology.
The projections $\pi_\lambda\colon\prod X_\lambda\to X_\lambda$ induce smooth maps $p_\lambda\colon P\to X_\lambda$ such that 
$$\alpha_\lambda\circ p_\lambda=\alpha_\mu \circ p_\mu\quad \forall\lambda,\mu\in \Lambda.$$
\end{defn}
Then the usual universal property of the pull-back holds, that is,
 if $Q$ is a diffeological space and  $q_\lambda\colon Q\to X_\lambda$ are smooth maps such that 
$$\alpha_\lambda\circ q_\lambda=\alpha_\mu\circ q_\mu\quad\forall \lambda, \mu\in \Lambda,$$
then there exists a unique smooth map $f\colon Q\to P$ such that 
$p_\lambda \circ f=q_\lambda$, $\forall \lambda\in \Lambda$, 
see Diagram \eqref{BIG}.
\begin{equation}\label{BIG}
\begin{tikzcd}[row sep=18pt]
{} & Q
\arrow[bend right,swap]{ddl}{q_\lambda}
\arrow[bend left]{ddr}{q_\mu}
\arrow{d}{f} & & \\
& P \arrow{dr}{p_\mu} \arrow{dl}[swap]{p_\lambda} \\
X_\lambda  & & 
X\mu 
\end{tikzcd}
\end{equation}
%====== Lemma  added
The following result is required for Lemma \ref{BUILD}.

\begin{lem}\label{SMALLBACK}
	Let $\lambda, \mu\in \Lambda$ and let $P_{\lambda\mu}$ be  the pull-back of $\alpha_\lambda$ and $\alpha_\mu$. Then there exists a unique map $F_{\lambda\mu}\colon P\to P_{\lambda\mu}$ such that 
	$$
		p_\lambda = p_\lambda^\prime \circ F_{\lambda\mu}\qr ,\quad
		p_\mu = p_\mu ^\prime \circ F_{\lambda\mu}
	$$
	(see Diagram \eqref{SMALL}). Notice that $F_{\lambda\mu}$ \qr{is} the projection onto the coordinates $\lambda$ and $\mu$.
	
	\begin{equation}\label{SMALL}
		\begin{tikzcd}[row sep=18pt]
			{} & P
			\arrow[bend right,swap]{ddl}{p_\lambda}
			\arrow[bend left]{ddr}{p_\mu}
			\arrow[dashed]{d}{F_{\lambda\mu}} & & \\
			& P_{\lambda\mu} \arrow{dr}{p_\mu ^\prime} \arrow{dl}[swap]{p_\lambda ^\prime} \\
			U_\lambda  & & 
			U_\mu 
		\end{tikzcd}
	\end{equation}
\end{lem}

\begin{rem}
Analogously we can define the {\em join}  of the family $\F$, that will be denoted by  $\bigast_{\lambda\in \Lambda}\alpha_\lambda$.

\end{rem}

\section{Generating families}\label{GEN-FAM}
\begin{defn}\label{GENFAM}{\cite[Art. 1.66]{PATRICK}}]
A collection 
$\F=\{(\alpha_\lambda,U_\lambda)\}_{\lambda\in\Lambda}$ of plots is a {\em generating family} of the diffeology $\D$ if $\D$ is the least diffeology  on $X$ containing the family $\F$. Equivalently,  any plot of $\D$ locally factors through some element of the family $\F$.
\end{defn}
\qr{Without loss of generality, we will assume that the family  $\F$  contains all constant plots.}

\qr{The following result was proved by us in \cite{MR2}. In short, a diffeological space is the join of any generating family.}

\begin{thm}\label{MT}
	Let $(X,\mathcal D)$ be a diffeological space. A family $\mathcal F=\{\alpha_i\colon U_i\to X\}$ of plots is a generating family for $\mathcal D$ if and only if there is a diffeomorphism $\alpha= \alpha_1\last\cdots\last\alpha_n\colon U=U_1\last\cdots\ast U_n\to X$
	commuting with the maps $\alpha_i$ and the natural maps $j_i\colon U_i\to U$, that is, $\alpha\circ j_i=\alpha_i$ for all $i$.
\end{thm}
\begin{cor}\label{GENER-COR}
If $\{(\alpha_1,U_1), \dots, (\alpha_n,U_n)\}$ is a generating family of the diffeology $\D$ on $X$, then
\begin{enumerate}
\item
 the universal map
$\alpha=\alpha_1\ast\cdots\ast\alpha_n\colon U=U_1\ast\cdots\ast U_n \to X$ is a diffeomorphism (hence a subduction), so 
$$\Ho(X,\D)\cong \Ho(U_1\ast\cdots\ast U_n)=\Ho(\alpha_1\ast\cdots\ast \alpha_n).$$
\item
$\Ho(X,\alpha_1\last\cdots\last \alpha_n)=0$ for the first type relative cohomoloy.
\item
$\varHo(X,\alpha_1\last\cdots\last \alpha_n)=0$ for the second type relative cohomology.
\end{enumerate}
\end{cor}

\begin{rem}The two latter results  easily extend to any infinite generating family.
\end{rem}

%\section{Cohomology and generating families}\label{COHOM-GEN}

\qr{In the next Lemmas we  further explore how the De Rham complex of $X$ is  determined by a generating family.}

\begin{lem}\label{ZERO}
Let $\{(\alpha_\lambda,U_\lambda)\}_{\lambda\in\Lambda}$ be a generating family of plots for the diffeological space $X$. If $\omega\in \Omega(X)$ is a form such that $\alpha_\lambda^*\omega=0$ for all $\lambda\in\Lambda$, then $\omega=0$.
\end{lem}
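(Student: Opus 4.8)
The plan is to reduce the claim to the local factorization property of generating families together with the definition of a differential form in a diffeological space. Recall that $\omega = 0$ means $\omega_\gamma = 0$ for every plot $(\gamma, W)$ on $X$, and since $W \subset \R^n$ is an open set, it suffices to show that $(\omega_\gamma)_p = 0$ for each $p \in W$, i.e. that $\omega_\gamma$ vanishes in a neighbourhood of each point. So the whole argument is local on $W$.

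First I would fix a plot $\gamma \colon W \to X$ and a point $p \in W$. By the definition of generating family, there is a neighbourhood $W_p \subset W$ of $p$ and a $\mathcal{C}^\infty$ map $h \colon W_p \to U_k$, for some index $k \in \{1, \dots, n\}$, such that $\gamma_{\vert W_p} = \alpha_k \circ h$. This is the one essential input, and it is exactly the factorization used in the proof of Theorem~\ref{GENER} and in Lemma~\ref{GENER-QUOT}.

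Next I would compute $\omega_\gamma$ on $W_p$ by naturality of the pullback. Since $\gamma_{\vert W_p} = \alpha_k \circ h$, we have
$$
(\omega_\gamma)_{\vert W_p} = \omega_{\alpha_k \circ h} = h^* \omega_{\alpha_k} = h^*(\alpha_k^*\omega) = h^*(0) = 0,
$$
using the compatibility axiom for differential forms (that $\omega_{\alpha \circ h} = h^* \omega_\alpha$) and the hypothesis $\alpha_k^* \omega = \omega_{\alpha_k} = 0$. Hence $\omega_\gamma$ vanishes on the neighbourhood $W_p$, and in particular $(\omega_\gamma)_p = 0$.

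Since $p \in W$ was arbitrary, $\omega_\gamma = 0$ on all of $W$; and since $(\gamma, W)$ was an arbitrary plot, $\omega = 0$. I do not expect a genuine obstacle here: the statement is essentially the observation that a generating family ``detects'' forms, and the only subtlety is the routine bookkeeping of checking that the local vanishing established on each $W_p$ glues to global vanishing, which is immediate because vanishing is a pointwise condition. In effect, this lemma is the $n$-plot analogue of the fact that $\alpha^*$ controls forms, and it follows from the same local factorization principle that makes generating families behave like open coverings.
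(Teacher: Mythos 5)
Your proof is correct and is essentially the paper's own argument: locally factor the plot $\gamma$ through some $\alpha_k$ via the generating family, then compute $\omega_\gamma = h^*\alpha_k^*\omega = 0$ by compatibility of forms with changes of coordinates. You simply spell out the pointwise/local bookkeeping that the paper's one-line proof leaves implicit.
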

\begin{proof}Any plot $\gamma \colon W \to X$ locally factors through some $\alpha_\lambda$, say $\gamma= \alpha_\lambda\circ h$ for some change of variables $h\colon W \to U_\lambda$. Then, locally,
$$\omega_\gamma=\gamma^*\omega=h^*\alpha_\lambda^*\omega=h^*0=0.\qedhere$$\end{proof}

\begin{lem}\label{BUILD}
Let $\{(\alpha_\lambda,U_\lambda)\}_{\lambda\in\Lambda}$ be a generating familiy of the diffeological space $X$. Let $P$ be the pullback of those plots, with projections $p_\lambda\colon P \to U_\lambda$ as in Diagram \eqref{pullback-1}. If $\theta_\lambda\in \Omega(U_\lambda)$, $\qr{\lambda\in\Lambda}$, are differential forms of the same degree $r$, such that $p_\lambda^*\theta_\lambda=p_\mu^*\theta_\mu$ for all $\lambda,\mu\in \Lambda$, then there is a unique form $\theta \in \Omega(X)$ such that $\alpha_\lambda^*\theta=\theta_\lambda$ for any $\lambda\in\Lambda$.
\end{lem}
%\q{EXTENDER AL CASO INFINITO}

In other words, any form $\omega$ in $X$ is completely determined by the pull-backs $\alpha_\lambda^*\omega$.

\begin{proof}The form $\theta$ will be defined by giving its value $\theta_\gamma$ in any plot $\gamma\colon W \to X$. Let $p\in W$, then $\gamma$ locally factors through some $\alpha_\lambda\colon U_\lambda \to X$, say
$\gamma= \alpha_\lambda\circ h$ for some $h\colon W_p\to U_\lambda$. Then we define
$\theta_\gamma=h^*\theta_\lambda$ on $W_p$.

To prove that it is well defined, assume that there are $\lambda,\mu\in \Lambda$
	such that
	$\alpha_\lambda\circ h_\lambda =\gamma_{\vert W_p}=\alpha_\mu\circ h_\mu$.
	Then there exists $F\colon W_p \to P_{\lambda\mu}$ such that
	$p'_\lambda\circ F =h_\lambda$ and $p'_\mu\circ F=h_\mu$, where $P_{\lambda\mu}$ is the pull-back of $\alpha_\lambda$ and $\alpha_\mu$. By Lemma \ref{SMALLBACK} there exists $F_{\lambda\mu}\colon P \to P_{\lambda\mu}$ such that $p_\lambda=p'_{\lambda}\circ F_{\lambda\mu}$ and $p_\mu=p'_\mu\circ F_{\lambda\mu}$. By hypothesis, we have
	$p_\lambda^*\theta_\lambda = p_\mu^*\theta_\mu$, hence
	$$F_{\lambda\mu}^*(p'_\lambda)^*\theta_\lambda
	=F_{\lambda\mu}^*(p'_\mu)^*\theta_\mu.
	$$
	But $F_{\lambda\mu}$ is a subduction map, so the induced morphism $F_{\lambda\mu}^*$ is injective, by  Theorem \ref{INJECT}. That means that $(p'_\lambda)^*\theta_\lambda=(p'_\mu)^*\theta_\mu$, so
	\begin{align*}
		h^*_\lambda\theta_\lambda=&(p'_\lambda\circ F)^*\theta_\lambda
		=F^*(p'_\lambda)^*\theta_\lambda \\
        =&F^*(p'_\mu)^*\theta_\mu
		=(p'_\mu\circ F)^*\theta_\mu=h^*_\mu\theta_\mu.\qedhere
	\end{align*}

\end{proof}

\begin{cor}\label{DISJ}For the disjoint union $U\sqcup V$,  a form $\theta\in \Omega(U\sqcup V)$ is completely determined by giving the forms $i_U^*\theta\in \Omega(U)$ and $i_V^*\theta \in \Omega(V)$.
\end{cor}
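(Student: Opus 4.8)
The plan is to recognize Corollary \ref{DISJ} as the special case of the two preceding lemmas in which the ambient diffeological space is $U\sqcup V$ itself and the distinguished plots are the two inclusions $i_U\colon U \to U\sqcup V$ and $i_V\colon V \to U\sqcup V$. First I would check that $\{(i_U,U),(i_V,V)\}$ is a generating family of the coproduct diffeology on $U\sqcup V$. This is essentially the definition of the coproduct (sum) diffeology \cite[Art. 1.39]{PATRICK}: every plot $\gamma\colon W \to U\sqcup V$ is, in a neighbourhood of each point of $W$, of the form $i_U\circ\bar\gamma$ or $i_V\circ\bar\gamma$ for some plot $\bar\gamma$ on $U$ or on $V$ respectively. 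That local factorization is exactly what is required of a generating family.

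Once this is in place, the statement is immediate from Lemma \ref{ZERO}. Indeed, suppose $\theta,\theta'\in\Omega(U\sqcup V)$ satisfy $i_U^*\theta = i_U^*\theta'$ and $i_V^*\theta = i_V^*\theta'$. Then the form $\theta-\theta'$ has vanishing pullback under both members of the generating family, so Lemma \ref{ZERO} forces $\theta-\theta'=0$. Hence $\theta$ is completely determined by the pair $(i_U^*\theta,\, i_V^*\theta)$, which is exactly the assertion of the corollary.

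Finally I would note that Lemma \ref{BUILD} in fact yields a stronger conclusion in this situation. The pullback $P$ of the maps $i_U$ and $i_V$ consists of the pairs $(u,v)$ with $i_U(u)=i_V(v)$; since the images of $i_U$ and $i_V$ are disjoint in $U\sqcup V$, this pullback is empty. Therefore the compatibility hypothesis $p_U^*\theta_U = p_V^*\theta_V$ of Lemma \ref{BUILD} is vacuous, and every pair $(\theta_U,\theta_V)\in\Omega(U)\times\Omega(V)$ glues to a unique form on $U\sqcup V$, giving an isomorphism $\Omega(U\sqcup V)\cong\Omega(U)\times\Omega(V)$. There is essentially no obstacle beyond being careful with the definition of the coproduct diffeology: the whole content of the corollary is already packaged in Lemmas \ref{ZERO} and \ref{BUILD}, and the only genuine verification is the local factorization of plots of $U\sqcup V$ through the two inclusions.
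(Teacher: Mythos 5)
Your proposal is correct and follows essentially the same route as the paper: the paper's entire proof is the observation that the pullback of $i_U$ and $i_V$ is empty, so that Lemma \ref{BUILD} (with the generating family $\{i_U,i_V\}$ of the coproduct diffeology) applies with a vacuous compatibility condition, which is exactly your final paragraph. Your additional derivation of uniqueness from Lemma \ref{ZERO} applied to $\theta-\theta'$ is a fine, slightly more explicit way to phrase the ``completely determined'' clause, but it is the same underlying argument.
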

\begin{proof}The pullback of $i_U$ and $i_V$ is $P=\emptyset$.
\end{proof}

\section{Mayer-Vietoris sequences}\label{MAYER-VIETORIS-1}
\qr{Let $\alpha\colon U \to X$ and $\beta\colon V \to X$ be two plots} or, more generally, two smooth maps.
In \cite{MAC-MEHR}, a Mayer-Vietoris sequence
\begin{equation*}
0\to \Omega(X) \to \Omega(\alpha)\oplus \Omega(\beta) \to \Omega(P)
\end{equation*}
was defined, where $P$ is the pullback of $\alpha$ and $\beta$ (see Diagram \ref{pullback-1}).
This sequence failed to be exact on the right, due  to the lack of partitions of unity.

%\q{EN REALIDAD  $p_U$ y $p_V$-horizontales}

\subsection{A new sequence} We will construct a Mayer-Vietoris sequence
\begin{equation}\label{MAYER-VIETORIS-2}
\begin{tikzcd}[column sep=12pt]
0\arrow[r]& \Omega(\alpha\last\beta) \arrow[r,"J"]& \Omega(\alpha)\oplus\Omega(\beta) \arrow[r,"\varPi"]& \Omega(p_X)
\end{tikzcd}
\end{equation}
where $\Omega(p_X)\subset \Omega(P)$ is the space of $p_X$-horizontal forms, for $p_X=\alpha\circ p_U=\beta\circ p_V\colon P \to X$ (see Diagram \eqref{NEW-DIAG})
\begin{equation}\label{NEW-DIAG}
\begin{tikzcd}[column sep=24pt, row sep=12pt]
&P\arrow[ld,"p_U"']\arrow[d,dashed]\arrow[rd,"p_V"]&\\
U \arrow[dr,"\alpha_U"']\arrow[r,"i_{U}"']\arrow[bend right,{ddr},"{\alpha}"']&U\sqcup V\arrow[d,"\CC"] &V\arrow[ld,"\beta_V"]\arrow[l,"i_V"]\arrow[bend left]{ddl}{\beta}\\
&U*V \arrow{d}{\alpha *\beta}&\\
&X&
\end{tikzcd}
\end{equation}
The morphisms $J,\varPi$ in \eqref{MAYER-VIETORIS-2} are defined as
$$J(\rho)=(\alpha_U^*\rho,\beta_V^*\rho), \quad  \varPi(\theta,\tau)=p_U^*\theta-p_V^*\tau.$$
\begin{prop}\label{SURJECT}
The sequence \eqref{MAYER-VIETORIS-2} is well defined and it is exact.
\end{prop}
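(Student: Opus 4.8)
The plan is to establish well-definedness first and then the three exactness assertions, reserving most of the effort for the reverse inclusion $\ker\varPi\subseteq\im J$.

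For well-definedness I would invoke Lemma \ref{EASY} directly. Since $\alpha=(\alpha\last\beta)\circ\alpha_U$ and $\beta=(\alpha\last\beta)\circ\beta_V$ by Diagram \eqref{DIAG-QUOT}, pulling an $(\alpha\last\beta)$-horizontal form $\rho$ back along $\alpha_U$ (resp. $\beta_V$) yields an $\alpha$-horizontal (resp. $\beta$-horizontal) form, so $J$ lands in $\Omega(\alpha)\oplus\Omega(\beta)$. Likewise, because $p_X=\alpha\circ p_U=\beta\circ p_V$, the forms $p_U^*\theta$ and $p_V^*\tau$ are both $p_X$-horizontal by Lemma \ref{EASY}, hence so is their difference, and $\varPi$ lands in $\Omega(p_X)$.

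For injectivity of $J$ I would factor $\alpha_U=\CC\circ i_U$, $\beta_V=\CC\circ i_V$ and set $\eta=\CC^*\rho\in\Omega(U\sqcup V)$. Then $J(\rho)=0$ gives $i_U^*\eta=0=i_V^*\eta$, so $\eta=0$ by Corollary \ref{DISJ}; since $\CC$ is a quotient map, $\CC^*$ is injective (Theorem \ref{INJECT}) and $\rho=0$. The inclusion $\im J\subseteq\ker\varPi$ reduces to the computation $\varPi(J(\rho))=(\alpha_U\circ p_U)^*\rho-(\beta_V\circ p_V)^*\rho$, which vanishes because $\alpha_U\circ p_U=\beta_V\circ p_V$ as maps $P\to U\last V$: for $(u,v)\in P$ one has $\alpha(u)=\beta(v)$, hence $[u]=[v]$ in the quotient.

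The hard part will be the reverse inclusion $\ker\varPi\subseteq\im J$. Given $(\theta,\tau)$ with $p_U^*\theta=p_V^*\tau$, I would first assemble the form $\eta\in\Omega(U\sqcup V)$ determined by $i_U^*\eta=\theta$ and $i_V^*\eta=\tau$ (Corollary \ref{DISJ}), then show $\eta$ is $\CC$-horizontal so that, by Corollary \ref{ISOM}, it descends to a unique $\rho\in\Omega(U\last V)$ with $\CC^*\rho=\eta$, forcing $J(\rho)=(\theta,\tau)$. Checking $\CC$-horizontality means comparing $h^*\eta$ and $(h')^*\eta$ whenever $\CC\circ h=\CC\circ h'$; working locally, both $h$ and $h'$ factor through $U$ or through $V$ by the coproduct and quotient diffeologies. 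When both factor through $U$ (resp. both through $V$), the hypothesis $\CC\circ h=\CC\circ h'$ forces $\alpha\circ\bar h=\alpha\circ\bar h'$ (resp. $\beta\circ\bar h=\beta\circ\bar h'$), and $\alpha$-horizontality of $\theta$ (resp. $\beta$-horizontality of $\tau$) gives the equality. The crucial mixed case, with $\bar h\colon W'\to U$ and $\bar h'\colon W'\to V$, is exactly where $P$ enters: $\CC\circ h=\CC\circ h'$ forces $\alpha(\bar h(w))=\beta(\bar h'(w))$, so $w\mapsto(\bar h(w),\bar h'(w))$ defines a plot $k\colon W'\to P$ with $p_U\circ k=\bar h$ and $p_V\circ k=\bar h'$; then $\bar h^*\theta=k^*p_U^*\theta=k^*p_V^*\tau=(\bar h')^*\tau$ by the kernel hypothesis. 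Finally I would verify, by the same three-case local argument, that the descended $\rho$ is genuinely $(\alpha\last\beta)$-horizontal, so that $\rho\in\Omega(\alpha\last\beta)$ as required. I expect this mixed-case bookkeeping, together with the repeated passage between the quotient and coproduct diffeologies, to be the main technical obstacle.
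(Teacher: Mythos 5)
Your proposal is correct, and its skeleton (well-definedness via Lemma \ref{EASY}, injectivity of $J$, the identity $\varPi\circ J=0$, and a gluing argument for $\ker\varPi\subseteq\im J$) matches the paper's proof; the difference lies in how the two nontrivial steps are discharged. For injectivity the paper cites Lemma \ref{ZERO} applied to the generating family $\{\CC\circ i_U,\CC\circ i_V\}$ of $U\last V$, whereas you factor through $\CC$ and combine Corollary \ref{DISJ} with Theorem \ref{INJECT}; these are interchangeable. For the key inclusion $\ker\varPi\subseteq\im J$ the paper simply invokes Lemma \ref{BUILD} for that same generating family and then asserts, without argument, that the resulting $\rho$ is $(\alpha\last\beta)$-horizontal. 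You instead build the form upstairs on $U\sqcup V$, prove $\CC$-horizontality by a three-case analysis (the mixed case converting $\CC\circ h=\CC\circ h'$ into a plot of $P$, which is exactly where the hypothesis $p_U^*\theta=p_V^*\tau$ enters), descend via Corollary \ref{ISOM}, and rerun the case analysis for the stronger $(\alpha\last\beta)$-horizontality. This buys precision exactly where the paper is thin: it makes explicit that the pullback of $\alpha_U$ and $\beta_V$ coincides with $P$ (i.e.\ $[u]=[v]$ forces $\alpha(u)=\beta(v)$, since the equivalence relation is \emph{generated} by that condition and every chain preserves the image in $X$ --- a point worth stating explicitly in your mixed case); it supplies the independence-of-factorization argument that the proof of Lemma \ref{BUILD} itself leaves implicit; and it proves the final horizontality claim that the paper only states. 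Incidentally, your mechanism is the same one the paper uses later for the lemma on $\tilde\theta$ in Section \ref{CUP}, so it is entirely in the spirit of the text. The only item you omit is the paper's Step 3, namely that $J$ and $\varPi$ commute with the differentials; it is a one-line check (pullbacks commute with $\dd$), but it is part of ``well defined'' and should be recorded.
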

\begin{proof}
Step 1: The morphism $J$ is well defined: If $\rho$ is $(\alpha\last\beta)$-horizontal then $(\CC\circ i_U)^*\rho$ is $\alpha$-horizontal, as follows from Lemma \ref{EASY}. Analogously, $(\CC\circ i_V)^*\rho$ is $\beta$-horizontal.

Step 2: That the morphism $\varPi$ is well defined  follows again from Lemma \ref{EASY}.

Step 3: It is an exercise to check that $J$ and $\varPi$ are morphisms of complexes (i.e. they commute with the differentials).

Step 4: That $J$ is injective follows from Lemma \ref{ZERO}, because $\{\CC\circ i_U, \CC\circ i_V\}$ is a generating family of plots for $U\last V$.

Step 5: $\im J \subset \ker \varPi$ because $\varPi\circ J=0$; on the other side, $\ker \varPi \subset \im J$ because given $(\theta,\tau) \in \Omega(\alpha)\oplus \Omega(\beta)$ such that $p_U^*\theta=p_V^*\tau$, there exists $\rho \in \Omega(U\last V)$ such that $\alpha_U^*\rho=\theta$ and $\beta_V^*\rho=\tau$, by Lemma \ref{BUILD}. Here we are using that $\{\CC\circ i_U,\CC\circ i_V\}$ is a generating family for $U\last V$. Moreover, the form $\rho$ is $(\alpha\last\beta)$-horizontal.
\end{proof}

%\section{Split Mayer-Vietoris sequence }\label{MAYER-VIETORIS-3}
\subsection{Split sequences}
The morphism $\varPi$ in \eqref{MAYER-VIETORIS-2} is not generally surjective.

\begin{defn}
The Mayer-Vietoris sequence is \emph{split} if $\varPi$ in \eqref{MAYER-VIETORIS-2} is surjective.
\end{defn}
\begin{exm}
In a finite dimensional manifold $M$ with the natural diffeology, \qr{the} Mayer-Vietoris sequence for the inclusions of two open sets $U\subset M$ and $V\subset M$ is split.
%See proposiition \ref{prop7.1}.
\end{exm}
\begin{exm} In \cite{IWASE}, Iwase and Izumida considered a {\em nice covering} of a diffeological space as a pair $U,V$ of open subsets of the $\mathcal{D}$-topology
for which there exists a partition of unity $\rho_U,\rho_V$ (\cite[Definition 2.1]{IWASE}). Then the corresponding Mayer-Vietoris sequence is split.\end{exm}

\qr{When the sequence \eqref{MAYER-VIETORIS-2} is split} we can take a (vector space) linear section 
\begin{equation}\label{EQTN}%
S\colon \Omega^r(p_X) \to \Omega^r(\alpha)\oplus\Omega^r(\beta)
\end{equation}
where $\varPi\circ S=\qr{\id}$.
In general the section $S$ is not a morphism of complexes, so we consider $\Delta=\dd S-S\dd$, as in Diagram \eqref{FLECHA}
\begin{equation}\label{FLECHA}
	\begin{tikzcd}[column sep=24pt, row sep=18pt]
		\Omega^r(\alpha)\oplus\Omega^r(\beta) \arrow[d,"d"'] &  \Omega^r(p_X) \arrow[l,"S"']\arrow[d,"d"]\arrow[ld,"\Delta"']\\
		\Omega^{r+1}(\alpha)\oplus\Omega^{r+1}(\beta)	& \arrow[l,"S"] \Omega^{r+1}(p_X)
	\end{tikzcd}
\end{equation}
Here, 
$\Delta \colon \Omega^r(p_X) \to\ \Omega^{r+1}(\alpha)\oplus\Omega^{r+1}(\beta)$
 is a morphism of complexes (up to sign) because
\begin{align*}\dd\Delta&=\dd(\dd S-S\dd)=\dd^2-\dd S\dd=-\dd S\dd,\\
\Delta \dd&=(\dd S-S\dd)\dd=\dd S\dd-S\dd^2=\dd S\dd\q .
\end{align*}
Then \qr{$\Delta$} induces a map in  cohomology. But notice that $\varPi \Delta=0$ because 
$$\varPi(\dd S-S\dd)=\varPi \dd S-\varPi S\dd=\dd\varPi S-\dd=\dd-\dd=0$$ \qr{because} $\varPi S=\qr{\id}$, hence $\mathrm{im}\Delta \subset \Omega^{r+1}(\alpha \ast \beta)$.
So it is more appropriate to write
	\begin{equation}\label{truedelta}
		J\Delta=\dd S-S\dd.
	\end{equation}
\begin{prop}\label{LONGEXACTSEQUENCE}
	If $\varPi$ is surjective, we have a long Mayer-Vietoris exact sequence \qr{in cohomology},	
	\begin{equation}\begin{tikzcd}[column sep=10pt]
			\cdots \arrow[r] &\Ho^r(\alpha\ast\beta)\arrow[r]&\Ho^r(\alpha)\oplus	\Ho^r(\beta \qr)\arrow[r]&\Ho^r(p_X)\arrow[r,"\Delta^*"]&\Ho^{r+1}(\alpha\ast\beta)\arrow[r]&\cdots
		\end{tikzcd}
	\end{equation}
	\qr{where} the connecting morphism is $\Delta^*$.	
\end{prop}

\section{Relative cup product}\label{CP}
\subsection{Absolute cup product} We can define a structure of algebra in the De Rham complex $\Omega^*(X)=\oplus_{r\geq 0}\Omega^r(X)$ by means of the exterior product of forms. Since the exterior derivative $d$ verifies
$$d(\omega\wedge \mu)=(d\omega)\wedge \mu+(-1)^r \omega\wedge (d\mu), \qquad \omega \in \Omega^r(X),\, \mu\in \Omega^s (X),$$
the exterior product induces a structure of algebra in the De Rham cohomology $\qr{\Ho}(X,\D)$. 
\begin{defn}
The morphism
$$\cupp\colon {\rm H^r}(X)\otimes {\rm {\rm H^s}}(X)\to H^{r+s}(X),\quad 
[\omega]\cupp[\mu]=[\omega\wedge\mu],$$
is called the \qr{\em absolute cup product}.
\end{defn}
\subsection{Relative cup product, first type}
\begin{defn}\label{RCP1}
Let $(\alpha,U)$, $(\beta,V)$ be two plots and let $(\alpha\last\beta,U\last V)$ be their join.
The {\em relative cup product}
$$\cupp\colon  \Ho^r(X,\alpha) \otimes \Ho^s(X,\beta) \to \Ho^{r+s}(X,\alpha \last \beta),$$
\qr{for the first type relative cohomology}, 
is defined as
	$[\omega]\cupp[\mu] =[\omega \wedge \mu]$.
\end{defn}
\begin{prop}
	The relative cup product in Definition \ref{RCP1} is well defined.
\end{prop}
\begin{proof}We have differential forms
	$\omega\in\Omega^r(X)$ with $\dd_X\omega=0$ and $\alpha^*\omega=0$,	analogously	$\mu\in\Omega^s(X)$, with $ \dd_X\mu=0$ and $\beta^*\mu=0$.
	
	Clearly $\omega\wedge\mu \in \Omega^{r+s}(X)$ and
	$\dd_X(\omega\wedge\mu)=0$
	
	Denote $$\rho \eqdef (\alpha\last\beta)^*(\omega\wedge\mu)\in\Omega(U\last V).$$ It only remains to show that $\rho=0$. 
	
	Let $\gamma \colon W \to U\last V$ be a plot in $U\last V$. We have to show that $(\rho_\gamma)_p=0$, for any $p\in W$. By definition of subduction diffeology, there is some neighbourhood $W_p\subset W$ such that $\gamma_{\vert W_p}$ factors through $U\sqcup V$, say
	$\gamma_{\vert W_p}=\CC\circ \tilde\gamma$
	where  $\qr{\mathcal C}\colon U\sqcup V\to U\last V$	and $\tilde\gamma\colon W_p \to U\sqcup V$.
	
	Assume for instance that $\tilde\gamma(p)\in U$. Then, by definition of coproduct diffeology, there is some neigbourhood $W'_p\subset W_p$ such that $\gamma$ factors through $U$, that is,
    %(see Diagram \ref{D7}),
	$\gamma_{\vert W'_p}=\CC\circ i_U\circ \bar\gamma$,
	for some map $\bar\gamma\colon W'_p \to U$.
	
	Then in a neighbourhood $W'_p$ of $p\in W$ we have
	$$(\alpha\last\beta)\circ\gamma=(\alpha\last\beta)\circ\CC\circ i_u\circ \bar\gamma=\alpha\circ\bar\gamma,$$
	hence
	\begin{align*}
		\rho_\gamma&=\gamma^*\rho=\gamma^*(\alpha\last\beta)^*(\omega\wedge\mu)=
		((\alpha\last\beta)\circ\gamma)^*(\omega\wedge\mu)\\
		&=(\alpha\circ\bar\gamma)^*(\omega\wedge\mu)=(\bar\gamma)^*\alpha^*(\omega\wedge\mu)=0,
	\end{align*}
	because $\alpha^*(\omega\wedge\mu)=\alpha^*\omega\wedge\alpha^*\mu$
	and $\alpha^*\omega=0$.
	
	The proof is analogous when $\tilde\gamma(p)\in V$.
\end{proof}
%\section{\q{Cup} product}

\subsection{Relative cup product, second type} We will now  define a relative cup product \qr{for the second type
cohomology}, when two smooth maps or plots $\alpha\colon U\to X$, $\beta\colon V \to X$, are given. We require the Mayer-Vietoris sequence to split.

If the Mayer-Vietoris sequence is split, then we have the map
$$ K=\qr{\id}-S\varPi \colon \	\Omega^r(\alpha)\oplus\Omega^r(\beta) \to\ 	\Omega^r(\alpha)\oplus\Omega^r(\beta),$$
\qr{where $S$ is the linear section considered in \eqref{EQTN}.}
We have 
$$\varPi  K=\varPi-\varPi S\varPi=\varPi-\varPi=0$$ because  $\varPi S=\qr{\id}$. Hence 
$$ K \colon \ \Omega^r(\alpha)\oplus\Omega^r(\beta)\to \Omega^r(\alpha\ast\beta)  $$
and it is more appropriate to write
\begin{equation}\label{trueK}
	JK=\qr{\id}-S\Pi.
\end{equation}
Moreover
$$KJ=(\qr{\id}-S\varPi)J=J-S\varPi J=J.$$
\qr{Notice that} $K$ is not a morphism of complexes, in fact 
$\dd  K- K\dd=-\Delta\varPi$
because 
\begin{align*}
	\dd K- K\dd=&\dd(\qr{\id}-S\varPi)-(\qr{\id}-S\varPi)\dd\\
	=&\dd-\dd S\varPi-\dd+S\varPi \dd\\
	=&(-\dd S+S\dd)\varPi\\
	=&-\Delta \varPi.
\end{align*}
\begin{defn}\label{DEF2-CUP}
	We define a relative cup product,
	\begin{align*}
		\cupp\colon\mathcal{H}^r (X,\alpha)\otimes \mathcal{H}^s (X,\beta)\rightarrow \mathcal{H}^{r+s}(X,
		\alpha*\beta),
	\end{align*}
	for differentiable maps $\alpha\colon U\to X$ and $\beta\colon V\to  X$, as
	\begin{align}\label{5}
		[(\omega,\theta)]\cupp[\omega',\theta')]=[(\omega\wedge\omega', \xi) ],
	\end{align}
	where 
	$$\xi=\qr{K}(\theta\wedge\alpha^*\omega',\qr{(-1)^\omega}\rd{\beta}^*\omega\wedge\theta')+\qr{\rd{(-1)^{\omega}}}\Delta(p_ U^*\theta\wedge p_ V^*\theta').$$

	In what follows we will verify that this product is well defined
    \qr{and skew-symmetric}.
\end{defn}	
\begin{prop}\label{WELL}    
	The product in \eqref{5} is well defined.
\end{prop}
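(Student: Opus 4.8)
The plan is to verify the two things that ``well defined'' comprises: first, that the cochain $(\omega\wedge\mu,\tilde\theta\wedge\dd\tilde\tau)$ is a cocycle of $\varOmega^{r+s}(X,\alpha\last\beta)$ whenever $(\omega,\theta)$ and $(\mu,\tau)$ are cocycles; and second, that its cohomology class depends only on the classes $[(\omega,\theta)]$ and $[(\mu,\tau)]$. Before either, I would record that the cochain lies in the correct group: $\omega\wedge\mu\in\Omega^{r+s}(X)$ is clear, while $\tilde\theta\wedge\dd\tilde\tau$ has degree $(r-1)+s=r+s-1$ and is $(\alpha\last\beta)$-horizontal, since $\Omega(\alpha\last\beta)$ is a subcomplex (hence contains $\dd\tilde\tau$) and the wedge of two horizontal forms is again horizontal (check $h^*(\eta\wedge\eta')=(h')^*(\eta\wedge\eta')$ directly from Definition \ref{VERTEQ}).

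For the cocycle condition, recall that $(\omega,\theta)$ and $(\mu,\tau)$ being cocycles means $\dd\omega=0=\dd\mu$ together with $\alpha^*\omega=\dd\theta$ and $\beta^*\mu=\dd\tau$. Writing out the relative differential,
\[\di(\omega\wedge\mu,\tilde\theta\wedge\dd\tilde\tau)=\bigl(\dd(\omega\wedge\mu),\,(\alpha\last\beta)^*(\omega\wedge\mu)-\dd(\tilde\theta\wedge\dd\tilde\tau)\bigr).\]
The first slot vanishes by \eqref{DIFF-ALG}, and since $\dd\dd\tilde\tau=0$ we have $\dd(\tilde\theta\wedge\dd\tilde\tau)=\dd\tilde\theta\wedge\dd\tilde\tau$, so the whole matter reduces to the single identity
\[(\alpha\last\beta)^*(\omega\wedge\mu)=\dd\tilde\theta\wedge\dd\tilde\tau \quad\text{in } \Omega^{r+s}(U\last V).\]
Both sides are $(\alpha\last\beta)$-horizontal, so by Lemma \ref{ZERO} applied to the generating family $\{\alpha_U,\beta_V\}$ of $U\last V$ it suffices to compare their pullbacks along $\alpha_U$ and along $\beta_V$. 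Using $(\alpha\last\beta)\circ\alpha_U=\alpha$, $\alpha_U^*\tilde\theta=\theta$ and $\alpha^*\omega=\dd\theta$, the $\alpha_U$-pullback reduces the identity to $\dd\theta\wedge\alpha^*\mu=\dd\theta\wedge\dd(\alpha_U^*\tilde\tau)$; symmetrically, the $\beta_V$-pullback reduces it to $\beta^*\omega\wedge\dd\tau=\dd(\beta_V^*\tilde\theta)\wedge\dd\tau$. To close these I would feed in the defining relations of the lifts over the pullback $P$, namely $p_U^*(\alpha_U^*\tilde\tau)=p_V^*\tau$ and $p_V^*(\beta_V^*\tilde\theta)=p_U^*\theta$, together with $p_X=\alpha\circ p_U=\beta\circ p_V$, which yield $p_U^*\alpha^*\mu=p_X^*\mu=p_U^*\dd(\alpha_U^*\tilde\tau)$ and the analogous equality on the $\beta_V$-side.

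I expect this identity to be the main obstacle, the remainder being formal. The difficulty is that the lifts do \emph{not} satisfy $(\alpha\last\beta)^*\omega=\dd\tilde\theta$ or $(\alpha\last\beta)^*\mu=\dd\tilde\tau$ literally; these hold only after pulling back along $\alpha_U$, respectively $\beta_V$. Hence the quick ``$\di$ is a derivation'' argument available in the Bott--Tu setting, where all the forms live on a single space, is not at hand. Everything is instead governed by the gluing data carried by the lifts over $P$: the two reduced equalities say precisely that $\alpha^*\mu$ and $\dd(\alpha_U^*\tilde\tau)$ (respectively $\beta^*\omega$ and $\dd(\beta_V^*\tilde\theta)$) agree \emph{after} pulling back to $P$. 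Since $p_U$ and $p_V$ are not subductions, promoting such a $P$-equality to the required equality on $U$ and $V$ is the delicate point, and it is exactly here that the lack of partitions of unity bites and the uniqueness clause of the preceding Lemma must be invoked.

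For the independence of representatives I would extend the formula $(\omega,\theta)\cupp(\mu,\tau)=(\omega\wedge\mu,\tilde\theta\wedge\dd\tilde\tau)$ to all cochains (the lifts exist for every horizontal $\theta,\tau$, and $\theta\mapsto\tilde\theta$ is linear by the uniqueness in the Lemma) and establish the graded Leibniz rule $\di(x\cupp y)=(\di x)\cupp y+(-1)^{|x|}x\cupp(\di y)$ at the cochain level. Granting it, the cocycle case above is recovered by taking $x,y$ to be cocycles, and independence follows at once: if $(\omega,\theta)=\di(a,b)$ and $(\mu,\tau)$ is a cocycle, then $(\omega,\theta)\cupp(\mu,\tau)=\di\bigl((a,b)\cupp(\mu,\tau)\bigr)$ is a coboundary, and symmetrically in the second variable, so the product descends to $\varHo(X,\alpha\last\beta)$.
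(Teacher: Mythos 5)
Your reduction of the cocycle condition is, in substance, the paper's own: the paper verifies $(\alpha\last\beta)^*(\omega\wedge\mu)=\dd(\tilde\theta\wedge\dd\tilde\tau)$ by letting a plot of $U\last V$ factor locally through $U$ or through $V$, which is exactly your appeal to Lemma \ref{ZERO} for the generating family $\{\alpha_U,\beta_V\}$. The gap is that you never establish the two identities you reduce to, namely $\dd\theta\wedge\alpha^*\mu=\dd\theta\wedge\dd(\alpha_U^*\tilde\tau)$ on $U$ and $\beta^*\omega\wedge\dd\tau=\dd(\beta_V^*\tilde\theta)\wedge\dd\tau$ on $V$: you derive their pullbacks to $P$ and then say that ``the uniqueness clause of the preceding Lemma must be invoked,'' but that is a gesture, not an argument, and in fact no argument can be built from the relations you permit yourself. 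The data you use ($\alpha_U^*\tilde\theta=\theta$, $\beta_V^*\tilde\tau=\tau$, horizontality, and compatibility over $P$) leave the lifts completely unconstrained off the images of $p_U$ and $p_V$, and with your reading of the compatibility the uniqueness you want to invoke does not even hold. Concretely: let $X=\R^2$, $U=\{x<1\}$, $V=\{x>0\}$ with $\alpha,\beta$ the inclusions, $\omega=\dd x$, $\theta=x$, $\mu=\dd y$, $\tau=y$; then $U\last V\cong\R^2$ with $\alpha\last\beta=\id$, and the lifts $\tilde\theta=\phi(x)$ (where $\phi(t)=t$ for $t<1$ and $\phi$ is constant for $t\geq 2$) and $\tilde\tau=y$ satisfy every relation you cite, yet $\dd\tilde\theta\wedge\dd\tilde\tau=\phi'(x)\,\dd x\wedge\dd y$ differs from $(\alpha\last\beta)^*(\omega\wedge\mu)=\dd x\wedge\dd y$ on $\{x\geq 2\}$. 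So the step you flag as delicate does not merely need more work: it fails for lifts characterized only by the properties you use, and any correct proof must exploit how $\beta_V^*\tilde\theta$ and $\alpha_U^*\tilde\tau$ are actually manufactured in the Lemma, not just the displayed equations. (Your diagnosis of where the difficulty sits is nevertheless sharper than the paper's treatment: the paper asserts the stronger identities $(\alpha\last\beta)^*\omega=\dd\tilde\theta$ and $(\alpha\last\beta)^*\mu=\dd\tilde\tau$ on all of $U\last V$ and checks them only for plots factoring through $U$, which is precisely the easy case.)

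The second half of your plan, independence of representatives, is likewise only announced: the cochain-level Leibniz rule $\di(x\cupp y)=(\di x)\cupp y+(-1)^{|x|}x\cupp(\di y)$ is asserted, not proved, and proving it runs into the same obstruction as above (the term $(\alpha\last\beta)^*(\omega\wedge\mu)$ must again be compared with $\dd\tilde\theta\wedge\dd\tilde\tau$, now together with new terms involving lifts of $\alpha^*\omega-\dd\theta$), so it is harder, not easier, than the cocycle condition you could not close. For comparison, the paper's proof does not address this half at all --- it only checks that a product of cocycles is a cocycle --- so here you attempt more than the paper does, but neither your text nor the paper's completes that verification.
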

\begin{proof}
	First, $\dd(\omega\wedge\omega')=0$
	because
	$\dd(\omega\wedge \omega')= \dd\omega\wedge\omega'+(-1)^r \omega\wedge \dd\omega'$, with
	$\dd\omega=0$
	and $\dd\omega'=0$.	
	
	Now we will prove that $(\alpha\last\beta)^{*}(\omega\wedge\omega')=\dd\xi$.
	
	We have 
	$$\alpha_U^*(\alpha\last\beta)^{*}(\omega\wedge\omega')=\alpha^*(\omega\wedge\omega')=\alpha^*\omega\wedge\alpha^*\omega'
	=
	\dd\theta\wedge\alpha^*\omega'=
	\dd(\theta\wedge\alpha^*\omega')$$
	and 
	$$\beta_V^*(\alpha\last\beta)^{*}(\omega\wedge\omega')=\beta^*(\omega\wedge\omega')=\beta^*\omega\wedge\beta^*{\omega'}=
\beta^*\omega\wedge\dd\theta'=\qr{(-1)^\omega}\dd(\beta^*\omega\wedge\theta')\qr,$$ 
\qr{because 
$\dd \beta^*\omega=0$ and $$\dd(\beta^*\omega\wedge\theta')=(-1)^\omega\beta^*\omega\wedge\dd\theta'.$$}
Hence 
	{\begin{equation}\label{UNO}
			J(\alpha\last\beta)^{*}(\omega\wedge\omega')=\dd(\theta\wedge\alpha^*\omega',\qr{(-1)^\omega}\beta^*\omega\wedge\theta').
		\end{equation}
	}
	Now we compute $ J\dd\xi$.
	We have
	\begin{align*}
		J\dd\xi=&\dd J\xi= \dd(JK(\theta\wedge\alpha^*\omega',\qr{(-1)^{\omega}}\rd{\beta}^*\omega\wedge\theta'))\\
		&\quad \qr{+(-1)^{\omega}}\dd J\Delta(p_U^*\theta\wedge p_V^*\theta')\\
		= &\dd(\theta\wedge\alpha^*\omega',\qr{(-1)^{\omega}}\rd{\beta}^*\omega\wedge\theta')-\dd S\varPi(\theta\wedge\alpha^*\omega',\qr{(-1)^{\omega}}\rd{\beta}^*\omega\wedge\theta')\\
		&\quad \qr{+(-1)^{\omega}}\dd(\dd S-S\dd)(p_U^*\theta\wedge p_V^*\theta') \\
		= &\dd(\theta\wedge\alpha^*\omega',\qr{(-1)^\omega}\rd{\beta}^*\omega\wedge\theta')-\dd S(p_U^*\theta\wedge p_U^*\alpha^*\omega'-\qr{(-1)^{\omega}}p_V^*\beta^*\omega\wedge p_V^*\theta')\\
		&\quad \qr{-(-1)^\omega}\dd S(\dd p_U^*\theta\wedge p_V^*\theta'+(-1)^{\theta}p_U^*\theta\wedge\dd p_V^*\theta')\\
		= &\dd(\theta\wedge\alpha^*\omega',\qr{(-1)^\omega}\rd{\beta}^*\omega\wedge\theta')-\dd S(p_U^*\theta\wedge p_U^*\alpha^*\omega'-\qr{(-1)^\omega } p_V^*\beta^*\omega\wedge p_V^*\theta')\\
		&\quad \qr{-(-1)^\omega}\dd S(p_U^*\dd\theta\wedge p_V^*\theta'+(-1)^{\theta}p_U^*\theta\wedge p_V^*\dd \theta')\\
		= &\dd(\theta\wedge\alpha^*\omega',\qr{(-1)^\omega}\beta^*\omega\wedge\theta')-\dd S(p_U^*\theta\wedge p_U^*\alpha^*\omega'
		-\qr{(-1)^\omega} p_V^*\beta^*\omega\wedge p_V^*\theta')\\
		&\qr{-(-1)^{\omega}}\dd S(p_U^*\alpha^*\omega\wedge p_V^*\theta'+(-1)^{\theta}p_U^*\theta\wedge p_V^*\beta^*\omega').
	\end{align*}
	Now, since $\alpha\circ p_U =p_X=\beta\circ p_V$, \qr{it follows that}
	$p_U^*\alpha^* = p_\qr V^*\beta^*$. \qr{Thus, we have}
	$$J\dd\xi=\dd(\theta\wedge\alpha^*\omega',(-1)^{\omega}\beta^*\omega\wedge\theta'),$$
 \qr{like in Equation \eqref{UNO}},
	because the term in
 $$p_U^*\theta\wedge p_U^*\alpha^*\omega' = p_U^*\theta\wedge p_V^*\beta^*\omega'= p_U^*\theta\wedge p_V^*\dd\theta'$$
 \qr{has coefficient}
 $$ (-1)- (-1)^\omega(-1)^\theta=(-1)-(-1)^{\omega+\omega-1}   =0,$$
	while the term
 $$p_V^*\beta^*\omega\wedge p_V^*\theta' = p^*_U\alpha^*\omega\wedge p_V^*\theta'=p_U^*\dd\theta\wedge p_V^*\theta'$$
	%$$\r{+}\dd S( p_V^*\beta^*\omega\wedge p_V^*\theta')+\r(-1)^\omega\dd S(p_U^*\alpha^*ç
 %\omega\wedge p_V^*\theta')=0$$
	%since
	%\r{wrong,we must change signs again}$$\r{+}(-1)^\omega =0 \q{REVISAR!!}$$ 
	has coefficient
 $   \qr{(-1)^\omega} - \qr{(-1)^\omega}=0$.
\end{proof}
\begin{thm}\label{SKSY} The cup product is skew-symmetric, that is
	\begin{equation*}%
		[(\omega',\theta')]\cupp[(\omega,\theta)]=(-1)^{\omega\omega'}[(\omega,\theta)]\cupp[(\omega',\theta')].
	\end{equation*}	
\end{thm}
\begin{proof}
	First we will carefully \qr{review} the notation we are using. Given two smooth maps $\alpha\colon U \to X$ and $\beta\colon V \to X$ we consider the pullback $P$ of $\alpha$ and $\beta$ and the map $p_X=\alpha p_U=\beta p_V$, \qr{see Diagram \eqref{pullback-1}.}
    
	If we consider the pullback $P'$ \qr{of the maps} $\beta$ and $\alpha$ \qr{in reverse order}, there is an obvious \qr{isomorphism} $\phi\colon P	\cong P'$ induced by the \qr{{\em flip} map }
	$$\phi\colon U\times V \to V\times U ,\quad   \phi(u,v)=(v,u).$$
	Notice that $\qr{p_U=p_U'\circ\phi , p_V=p_V'\circ\phi}$ and $\qr{p_X=p'_X\circ \phi}$.
	The induced isomorphism in \qr{the horizontal De Rham complexes}
	$$\qr{\phi^*\colon\Omega(p'_X)  \cong \Omega(p_X)}$$
	sends $\Sigma \xi_i^U\otimes\xi_i^V$ into $\Sigma \xi_i^V\otimes\xi_i^U$\qr . 
	Then, in order to simplify notation, we will consider that $\phi^*=\id$\qr .
  	
	Analogously, there is an obvious isomorphism between the joins $\alpha\last\beta   \cong  \beta\last\alpha$ induced by $U \sqcup V  \cong  V \sqcup U$. Again we shall consider that
	$\Omega(\alpha\last\beta)   \cong  \Omega(\beta\last\alpha)$
	is the identity, for the sake of simplicity.
	
	Now remember  that the pair $(\alpha,\beta)$ is \qr{{\em split}} if the map $\varPi$ in the Mayer-Vietoris sequence \eqref{ESTA} is surjective,	
	\begin{equation}\label{ESTA}
		\begin{tikzcd}[column sep=12pt]
			0\arrow[r]& \Omega^r(\alpha\last\beta) \arrow[r,"J"]& \Omega^r(\alpha)\oplus\Omega^r(\beta) \arrow[r,"\varPi"]& \Omega^r(p_X)
		\end{tikzcd}
	\end{equation}	
	where $\varPi(\theta,\theta')=p_U^*\theta-p_V^*\theta'$.
	In this case there is a linear section $ S\colon \Omega^r(p_X) \to \Omega^r(\alpha)\oplus\Omega^r(\beta)$ where $\varPi\circ S=\qr{\id}.$
	
	Now we have	
	\begin{equation*}
		\begin{tikzcd}%[sep=huge]
			\Omega^r(\alpha)\oplus\Omega^r(\beta) \arrow[r,"\qr{\phi}"] \arrow[d,"\varPi"] &
			\Omega^r(\beta)\oplus\Omega^r(\alpha)  \arrow[d,"\qr{\varPi'}"']  \\
			\Omega^r(p_X)\arrow[r,equal]  &
			\Omega^r(p_X'). 
		\end{tikzcd}
	\end{equation*}
	where \qr{$\qr{\phi}(\theta,\theta')=(\theta',\theta)$ is \qr{a} flip morphism}. \qr{We have  } 
    \begin{align*}
    \qr{\varPi(\theta,\theta')}&=p_U^*\theta-p_V^*\theta'=\phi^*(p_U')^*theta-\phi^*(p_V')^*\theta'\\
    &=-\phi^*((p_V')^*\theta'-(p_U')^*\theta)=-\phi^*\varPi'(\theta',\theta),
    \end{align*}\
    that is, 
 $\qr{\varPi=-\varPi'\phi}$ \qr{by the convention $\phi^*=\id$ above}.
	Then the Mayer-Vietoris  sequence of $\qr{(\beta,\alpha)}$ admits the linear section 
 $$S'=-\phi S,$$
 because
$$\varPi'S'=-\varPi'\phi S=\varPi S=\qr{\id}.$$
	
	The linear right section $S$ induced a left section
	$\qr K \colon \	\Omega^r(\alpha)\oplus\Omega^r(\beta) \to\ 	\Omega^r(\alpha\ast\beta)$
	where $J\qr K=\qr{\id}-S\varPi$ \qr{(see \eqref{trueK})}.
 Notice that $J\qr KJ=J-S\varPi J=J$\qr , that is\qr , $\qr KJ=\qr{\id}$.
	
 \qr{Analogously} we have 
	$$\qr K' \colon \	\Omega^r(\beta)\oplus\Omega^r(\alpha) \to\ 	\Omega^r(\beta\ast\alpha)$$
	\qr{given} by $J'\qr{K'}=\qr{\id}-S'\varPi'$. In fact we have
 $$\qr K'=\qr K\phi,$$ because
	$$\phi J\qr K'=\qr{\id}-(-\phi S)(-\varPi\phi)=\qr{\id}-\phi S\varPi\phi$$ 
 hence 
 $$J\qr K'=\phi-S\varPi\phi=(\qr{\id}-S\varPi)\phi=J\qr K\phi,$$ so 
 $\qr K'=\qr K\phi$.
	
	Finally we had defined $$\Delta \colon \Omega^r(p_X) \to\ \Omega^{r+1}(\alpha \ast \beta)$$ as $J\Delta=\dd S-S\dd$ \qr{ (see \eqref{truedelta})} .
	
	Analogously, \qr{we define} $$\Delta' \colon \Omega^r(p'_X) \to\ \Omega^{r+1}(\alpha)\oplus\Omega^{r+1}(\beta)$$ by $J'\Delta'=\dd' S'-S'\qr\dd$, where $\dd'=\phi\dd\phi$ \qr{as in} the following diagram:
	\begin{equation*}
		\begin{tikzcd}%[sep=huge]
			\Omega^r(\alpha)\oplus\Omega^r(\beta) \arrow[d,"\dd"'] \arrow[r,"\phi"] &
			\Omega^r(\beta)\oplus\Omega^r(\alpha) \arrow[d,"\dd'"']  \\
			\Omega^{r+1}(\alpha)\oplus\Omega^{r+1}(\beta)\arrow[r,"\phi"]  &
			\Omega^{r+1}(\beta)\oplus\Omega^{r+1}(\alpha). 
		\end{tikzcd}
	\end{equation*}	
    \qr{Notice that the differential of $\Omega(p_X')$ is that of $\Omega(p_X)$.}
    
 Hence
 $\qr{\Delta'=-\Delta}$
 because
 \begin{align*}
 \phi J \Delta'=&J'\Delta'=\dd' S'-S'\dd=\phi\dd\phi(-\phi S)-(-\phi S)\dd\\
 =&\phi(-\dd S + S\dd)=-\phi(J\Delta)
 \end{align*}

 \qr{Now, since}
\begin{equation}\label{XI}
J\xi=J\qr K(\theta\wedge\alpha^*\omega',(-1)^{\omega}\beta^*\omega\wedge\theta')\qr +(-1)^{\omega}J\Delta(p_U^*\theta\wedge p_V^*\theta')
\end{equation}
	\qr{we have}
	$$J'\xi'=J'\qr K'(\theta'\wedge\beta^*\omega,(-1)^{\omega'}\alpha^*\omega'\wedge\theta)\qr +(-1)^{\omega'}J'\Delta'(p_V^*\theta'\wedge p_U^*\theta)$$ 
	so 
	\begin{align*}
		\phi J\xi'=&\phi J\qr K\phi(\theta'\wedge\beta^*\omega,(-1)^{\omega'}\alpha^*\omega'\wedge\theta)\qr +(-1)^{\omega'}\phi J(-\Delta)(p_{\q V}^*\theta'\wedge p_{\q U}^*\theta)\\
		=&\phi J\qr K((-1)^{\omega'}\alpha^*\omega'\wedge\theta,\theta'\wedge\beta^*\omega)\qr -(-1)^{\omega'}\phi J(\Delta)(p_V^*\theta'\wedge p_U^*\theta)\\ 
  =&\phi J\qr K((-1)^{\omega'}(-1)^{\theta\omega'}\theta\wedge\alpha^*\omega',(-1)^{\theta'\omega}\beta^*\omega\wedge\theta')\\
		&\quad\qr -(-1)^{\omega'}\phi J\Delta((-1)^{\theta\theta'}p_\qr U^*\theta\wedge p_\qr V^*\theta').
\end{align*}
So composing with $\qr{(-1)^{\omega\omega'}\phi}$ we have
 \begin{align*}
(-1)^{\omega\omega'}J\xi'=&JK((-1)^{\omega\omega'}(-1)^{\omega'}(-1)^{\omega'\theta}\theta\wedge\alpha^*\omega',(-1)^{\omega\omega'}(-1)^{\theta'\omega}\beta^*\omega\wedge\theta')\\
&-(-1)^{\omega\omega'}(-1)^{\omega'}J\Delta((-1)^{\theta\theta'}p_\qr U^*\theta\wedge p_\qr V^*\theta'),
\end{align*}
which equals $J\xi$ in Equation \eqref{XI}, after checking the signs:
 \begin{itemize}
 \item
 $\omega\omega'+\omega'+\omega'\theta = \omega'(\omega+1+\omega-1)=2\omega\omega'$,\\
 hence
 $$(-1)^{\omega\omega'}(-1)^{\omega'}(-1)^{\omega'\theta}=1.$$
 \item
 $\omega\omega'+\theta'\omega=\omega(\omega'+\omega'-1)=2\omega\omega'-\omega$\\
 hence
 $$(-1)^{\omega\omega'}(-1)^{\theta'\omega}=(-1)^\omega.$$
 \item
 $1+\omega\omega'+\omega'+\theta\theta'=1+\omega\omega'+\omega'+(\omega-1)(\omega'-1)=2+2\omega\omega'-\omega$\\
 hence
 $$(-1)(-1)^{\omega\omega'}(-1)^{\omega'}(-1)^{\theta\theta'}=(-1)^\omega. \qedhere$$
 \end{itemize}
\end{proof}

%======================================

\end{document}